\theoremstyle{plain}
\newtheorem{lemma}{Lemma}
\newtheorem{proposition}{Proposition}
\newtheorem{theorem}{Theorem}
\numberwithin{equation}{section}
\begin{document}
\title[Products of Beta  variables]{Products of Beta distributed random variables}
\author{Charles F. Dunkl}
\address{Dept. of Mathematics, PO Box 400137\\
University of Virginia, Charlottesville VA 22904-4137}
\email{cfd5z@virginia.edu}
\urladdr{http://people.virginia.edu/\symbol{126}cfd5z/home.html}
\date{23 April 2013}
\subjclass[2000]{Primary 33C20, 33C60; Secondary 62E15}
\keywords{Beta function, moment sequences}

\begin{abstract}
This is an expository note on useful expressions for the density function of a
product of independent random variables where each variable has a Beta distribution.

\end{abstract}
\maketitle

\section{Introduction}

This is a brief exposition of some techniques to construct density functions
with moment sequences of the form $\prod\limits_{j=1}^{m}\dfrac{\left(
u_{j}\right)  _{n}}{\left(  v_{j}\right)  _{n}}$, where $\left(  a\right)
_{n}$ denotes the Pochhammer symbol $\frac{\Gamma\left(  a+n\right)  }%
{\Gamma\left(  a\right)  }$. Such a density $f\left(  x\right)  $ can be
expressed as a certain Meijer G-function, that is, a sum of generalized
hypergeometric series, and as a power series in $\left(  1-x\right)  $ whose
coefficients can be calculated by a recurrence. The former expression is
pertinent for numerical computations for $x$ near zero, while the latter is
useful for $x$ near $1$.

All the random variables considered here take values in $\left[  0,1\right]
$, density functions are determined by their moments: for a random variable
$X$ we have $P\left[  X<a\right]  =\int_{0}^{a}f\left(  x\right)  dx$ for
$0\leq a\leq1$, and the expected value $E\left(  X^{n}\right)  =\int_{0}%
^{1}x^{n}f\left(  x\right)  dx$ is the $n$th moment. The basic building block
is the Beta distribution ($\alpha,\beta>0$)
\begin{equation}
h\left(  \alpha,\beta;x\right)  =\frac{1}{B\left(  \alpha,\beta\right)
}x^{\alpha-1}\left(  1-x\right)  ^{\beta-1},\label{fbeta}%
\end{equation}
where $B\left(  \alpha,\beta\right)  :=\frac{\Gamma\left(  \alpha\right)
\Gamma\left(  \beta\right)  }{\Gamma\left(  \alpha+\beta\right)  }$, then%
\[
\int_{0}^{1}x^{n}h\left(  \alpha,\beta;x\right)  dx=\frac{\left(
\alpha\right)  _{n}}{\left(  \alpha+\beta\right)  _{n}},n=0,1,2,\ldots.,
\]
thus $\left\{  \frac{\left(  u\right)  _{n}}{\left(  v\right)  _{n}}\right\}
$ is a moment sequence if $0<u<v$ (with $\alpha=u,\beta=v-u$). The moments of
the product of independent random variables are the products of the respective
moments, that is, suppose the densities of (independent) $X$ and $Y$ are $f,g$
respectively and define%
\begin{equation}
f\ast g\left(  x\right)  =\int_{x}^{1}f\left(  t\right)  g\left(  \frac{x}%
{t}\right)  \frac{dt}{t},\label{convol}%
\end{equation}
then $f\ast g$ is a density, $P\left[  XY<a\right]  =\int_{0}^{a}f\ast
g\left(  x\right)  dx$ for $0\leq a\leq1$ and%
\[
\int_{0}^{1}x^{n}\left(  f\ast g\left(  x\right)  \right)  dx=\int_{0}%
^{1}x^{n}f\left(  x\right)  dx\int_{0}^{1}y^{n}g\left(  y\right)  dy.
\]
These are the main results: suppose the parameters $u_{1},\ldots,u_{m}$ and
$v_{1},\ldots,v_{m}$ satisfy $v_{i}>u_{i}>0$ for each $i$, then there is a
unique density function $f$ with the moment sequence $\prod\limits_{j=1}%
^{m}\dfrac{\left(  u_{j}\right)  _{n}}{\left(  v_{j}\right)  _{n}}$;

\begin{enumerate}
\item if also $u_{i}-u_{j}\notin\mathbb{Z}$ for each $i\neq j$ then for $0\leq
x<1$%
\begin{align}
f\left(  x\right)   &  =\left(  \prod_{k=1}^{m}\frac{\Gamma\left(
v_{k}\right)  }{\Gamma\left(  u_{k}\right)  }\right)  \sum_{i=1}^{m}\frac
{1}{\Gamma\left(  v_{i}-u_{i}\right)  }\prod_{j=1,j\neq i}^{m}\frac
{\Gamma\left(  u_{j}-u_{i}\right)  }{\Gamma\left(  v_{j}-u_{i}\right)
}x^{u_{i}-1}\label{bigf}\\
&  \times\sum_{n=0}^{\infty}\prod_{k=1}^{m}\frac{\left(  u_{i}-v_{k}+1\right)
_{n}}{\left(  u_{i}-u_{k}+1\right)  _{n}}x^{n};\nonumber
\end{align}

\item for $\delta:=\sum_{i=1}^{m}\left(  v_{i}-u_{i}\right)  $ there is an
$\left(  m+1\right)  $-term recurrence for the coefficients $\left\{
c_{n}\right\}  $ such that%
\begin{equation}
f\left(  x\right)  =\frac{1}{\Gamma\left(  \delta\right)  }\prod_{i=1}%
^{m}\frac{\Gamma\left(  v_{i}\right)  }{\Gamma\left(  u_{i}\right)  }\left(
1-x\right)  ^{\delta-1}\left\{  1+\sum_{n=1}^{\infty}c_{n}\left(  1-x\right)
^{n}\right\}  ,0<x\leq1.\label{recur1}%
\end{equation}

\end{enumerate}

The use of the inverse Mellin transform to derive the series expansion in
(\ref{bigf}) is sketched in Section 2. The differential equation initial value
problem for the density is described in Section 3, and the recurrence for
(\ref{recur1}) is derived in Section 4.

The examples in Section 5 include the relatively straightforward situation
$m=2$ and the density of the determinant of a random $4\times4$
positive-definite matrix of trace one, where $m=3$.

\section{The inverse Mellin transform}

The Mellin transform of the density $f$ is defined by%
\[
Mf\left(  p\right)  =\int_{0}^{1}x^{p-1}f\left(  x\right)  dx.
\]
This is an analytic function in $\left\{  p:\operatorname{Re}p>0\right\}  $
and agrees with the meromorphic function%
\[
p\mapsto\prod_{j=1}^{m}\frac{\Gamma\left(  v_{j}\right)  \Gamma\left(
u_{j}+p-1\right)  }{\Gamma\left(  u_{j}\right)  \Gamma\left(  v_{j}%
+p-1\right)  }%
\]
at $p=1,2,3,\ldots$ thus the two functions coincide in the half-plane by
Carlson's theorem. The inverse Mellin transform is%
\[
f\left(  x\right)  =\frac{1}{2\pi i}\int_{\sigma-i\infty}^{\sigma+i\infty
}Mf\left(  p\right)  x^{-p}dp,
\]
for $\sigma>0$; it turns out the integral can be evaluated by residues (it is
of Mellin-Barnes type). For each $j$ and each $n=0,1,2,\ldots$ there is a pole
of $Mf\left(  p\right)  $ at $p=1-n-u_{j}$; the hypothesis $u_{i}-u_{j}%
\notin\mathbb{Z}$ for each $i\neq j$ implies that each pole is simple. The
residue at $p=1-n-u_{k}$ equals%
\begin{align*}
&  x^{u_{k}-1+n}\prod_{j=1}^{m}\frac{\Gamma\left(  v_{j}\right)  }%
{\Gamma\left(  u_{j}\right)  \Gamma\left(  v_{j}-u_{k}-n\right)  }\prod_{i\neq
k}\Gamma\left(  u_{i}-u_{k}-n\right)  \\
&  \times\lim_{p\rightarrow1-n-u_{k}}\left(  p-1+n+u_{k}\right)  \Gamma\left(
u_{k}+p-1\right)  .
\end{align*}
To simplify this we use%
\begin{align*}
\Gamma\left(  a-n\right)   &  =\frac{\Gamma\left(  a\right)  }{\left(
a-n\right)  _{n}}=\left(  -1\right)  ^{n}\frac{\Gamma\left(  a\right)
}{\left(  1-a\right)  _{n}},\\
\lim_{p\rightarrow-p_{0}}\left(  p+p_{0}\right)  \Gamma\left(  p_{0}%
+p-n\right)   &  =\lim_{p\rightarrow-p_{0}}\left(  p+p_{0}\right)
\frac{\Gamma\left(  p+p_{0}+1\right)  }{\left(  p+p_{0}-n\right)  _{n+1}}\\
&  =\frac{\Gamma\left(  1\right)  }{\left(  -n\right)  _{n}}=\frac{\left(
-1\right)  ^{n}}{n!}.
\end{align*}
Thus
\begin{equation}
f\left(  x\right)  =\left(  \prod_{i=1}^{m}\frac{\Gamma\left(  v_{i}\right)
}{\Gamma\left(  u_{i}\right)  }\right)  \sum_{k=1}^{m}\frac{x^{u_{k}-1}%
}{\Gamma\left(  v_{k}-u_{k}\right)  }\prod_{j\neq k}\frac{\Gamma\left(
u_{j}-u_{k}\right)  }{\Gamma\left(  v_{j}-u_{k}\right)  }\sum_{n=0}^{\infty
}\prod_{i=1}^{m}\frac{\left(  1+u_{k}-v_{i}\right)  _{n}}{\left(
1+u_{k}-u_{i}\right)  _{n}}x^{n},\label{sol0}%
\end{equation}
(note $\left(  1+u_{k}-u_{k}\right)  _{n}=n!);$in fact this is a Meijer
G-function (see \cite[16.17.2]{DLMF}).

\section{The differential equation}

The equation is of Mellin-Barnes type: let $\partial_{x}:=\frac{d}%
{dx},D:=x\partial_{x}$ and define the differential operator
\[
T\left(  u,v\right)  =-x\prod\limits_{j=1}^{m}\left(  D+2-v_{j}\right)
+\prod\limits_{j=1}^{m}\left(  D+1-u_{i}\right)  .
\]
The highest order term is $\left(  1-x\right)  x^{m}\partial_{x}^{m}$ and the
equation has regular singular points at $x=0$ and $x=1$. We find%
\begin{align*}
T\left(  u,v\right)  x^{c}\sum_{n=0}^{\infty}c_{n}x^{n} &  =x^{c}\sum
_{n=0}^{\infty}c_{n}\left\{  -\prod\limits_{j=1}^{m}\left(  n+c+2-v_{j}%
\right)  x^{n+1}+\prod\limits_{j=1}^{m}\left(  n+c+1-u_{i}\right)
x^{n}\right\}  \\
&  =x^{c}\sum_{n=1}^{\infty}x^{n}\left\{  c_{n}\prod\limits_{j=1}^{m}\left(
n+c+1-u_{i}\right)  -c_{n-1}\prod\limits_{j=1}^{m}\left(  n+c+1-v_{j}\right)
\right\}  \\
&  +x^{c}c_{0}\prod\limits_{j=1}^{m}\left(  c+1-u_{i}\right)  .
\end{align*}
The solutions of the indicial equation are $c=u_{i}-1,1\leq i\leq m$. Assume
$u_{i}-u_{j}\notin\mathbb{Z}$ for $i\neq j$. Let $c=u_{1}-1$ then obtain a
solution of $T\left(  u,v\right)  f\left(  x\right)  =0$ by solving the
recurrence
\[
c_{n}=\prod\limits_{j=1}^{m}\frac{\left(  u_{1}-v_{j}+n\right)  }{\left(
u_{1}-u_{j}+n\right)  }c_{n-1}=\frac{%
{\textstyle\prod\nolimits_{j=1}^{m}}
\left(  u_{1}-v_{j}+1\right)  _{n}}{n!%
{\textstyle\prod\nolimits_{j=2}^{m}}
\left(  u_{1}-u_{j}+1\right)  _{n}}c_{0}.
\]
Thus the solutions of $T\left(  u,v\right)  f=0$ are linear combinations of%
\begin{align*}
f_{1}\left(  x\right)   &  :=x^{u_{1}-1}~_{m}F_{m-1}\left(
\genfrac{}{}{0pt}{}{u_{1}-v_{1}+1,\ldots,u_{1}-v_{m}+1}{u_{1}-u_{2}%
+1,\ldots,u_{1}-u_{m}+1}%
;x\right)  ,\\
f_{i}\left(  x\right)   &  :=x^{u_{i}-1}\sum_{n=0}^{\infty}\prod_{j=1}%
^{m}\frac{\left(  u_{i}-v_{j}+1\right)  _{n}}{\left(  u_{i}-u_{j}+1\right)
_{n}}x^{n},
\end{align*}
for $1\leq i\leq m$ (note the factor $\left(  u_{i}-u_{i}+1\right)  _{n}=n!$).

\begin{lemma}
Suppose $g$ is differentiable on $(0,1]$, $g^{\left(  j\right)  }\left(
1\right)  =0$ for $0\leq j\leq k$ and $h\left(  x\right)  :=\left(
D+s\right)  g\left(  x\right)  $ then $h^{\left(  j\right)  }\left(  1\right)
=0$ for $0\leq j\leq k-1$. Furthermore if $k\geq0$ then for $n\geq0$
\[
\int_{0}^{1}x^{n}h\left(  x\right)  dx=\left(  s-n-1\right)  \int_{0}^{1}%
x^{n}g\left(  x\right)  dx.
\]

\end{lemma}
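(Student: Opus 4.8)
The plan is to work directly from the definition $D=x\partial_{x}$, so that $h(x)=xg'(x)+sg(x)$, and then treat the two assertions separately. For the first, I would differentiate $h$ repeatedly. The only term needing attention is $xg'(x)$, and the Leibniz rule applied to the product $x\cdot g'(x)$ truncates after two terms because $x$ admits at most one nonzero derivative: $\frac{d^{j}}{dx^{j}}\left[xg'(x)\right]=xg^{(j+1)}(x)+jg^{(j)}(x)$. Hence $h^{(j)}(x)=xg^{(j+1)}(x)+(j+s)g^{(j)}(x)$, and evaluating at $x=1$ gives $h^{(j)}(1)=g^{(j+1)}(1)+(j+s)g^{(j)}(1)$. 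For $0\le j\le k-1$ both indices $j$ and $j+1$ lie in $\{0,\dots,k\}$, so the hypothesis $g^{(\ell)}(1)=0$ for $0\le\ell\le k$ forces $h^{(j)}(1)=0$. This disposes of the first assertion with no real difficulty.

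For the integral identity I would split $\int_{0}^{1}x^{n}h(x)\,dx=\int_{0}^{1}x^{n+1}g'(x)\,dx+s\int_{0}^{1}x^{n}g(x)\,dx$ and integrate the first piece by parts. Using $\frac{d}{dx}\left[x^{n+1}g(x)\right]=x^{n+1}g'(x)+(n+1)x^{n}g(x)$, I obtain $\int_{0}^{1}x^{n+1}g'(x)\,dx=\left[x^{n+1}g(x)\right]_{0}^{1}-(n+1)\int_{0}^{1}x^{n}g(x)\,dx$. Combining the two contributions produces the coefficient $s-(n+1)=s-n-1$ multiplying $\int_{0}^{1}x^{n}g(x)\,dx$, which is exactly the claimed formula.

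The one step requiring care, and the only genuine obstacle, is the vanishing of the boundary term $\left[x^{n+1}g(x)\right]_{0}^{1}$. At the upper limit the hypothesis $k\ge 0$ gives $g(1)=g^{(0)}(1)=0$, so that endpoint contributes nothing. At the lower limit one needs $x^{n+1}g(x)\to 0$ as $x\to 0^{+}$; since $n\ge 0$ this holds whenever $g$ grows no faster than $x^{-1+\varepsilon}$ near the origin, which is precisely the behaviour $f_{i}(x)\sim x^{u_{i}-1}$ (with $u_{i}>0$) of the solutions to which the lemma will be applied, and which simultaneously guarantees convergence of the integrals. I would therefore record this growth condition as the standing assumption under which the integration by parts is justified, and note that under it the argument is complete.
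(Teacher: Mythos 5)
Your argument is correct and follows essentially the same route as the paper: the identity $h^{(j)}(1)=g^{(j+1)}(1)+(j+s)g^{(j)}(1)$ (the paper derives it via the operator identity $\partial_x^{j}D=x\partial_x^{j+1}+j\partial_x^{j}$, which is the same computation as your Leibniz-rule expansion) and then integration by parts for the moment identity. Your explicit attention to the vanishing of the boundary term $x^{n+1}g(x)$ at $x=0^{+}$ is a detail the paper's proof passes over silently, and your proposed growth hypothesis is indeed what holds in every application of the lemma.
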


\begin{proof}
By induction $\partial_{x}^{j}D=x\partial_{x}^{j+1}+j\partial_{x}^{j}$ for
$j\geq0$. Hence $h^{\left(  j\right)  }\left(  1\right)  =g^{\left(
j+1\right)  }\left(  1\right)  +\left(  j+s\right)  g^{\left(  j\right)
}\left(  1\right)  $. Next%
\begin{align*}
\int_{0}^{1}x^{n}h\left(  x\right)  dx  &  =s\int_{0}^{1}x^{n}g\left(
x\right)  dx+\int_{0}^{1}x^{n+1}g^{\prime}\left(  x\right)  dx\\
&  =s\int_{0}^{1}x^{n}g\left(  x\right)  dx+g\left(  1\right)  -\left(
n+1\right)  \int_{0}^{1}x^{n}g\left(  x\right)  dx,
\end{align*}
and $g\left(  1\right)  =0$ by hypothesis.
\end{proof}

This is the fundamental initial value system:%
\begin{align}
T\left(  u,v\right)  f\left(  x\right)   &  =0,\label{Fsys}\\
f^{\left(  j\right)  }\left(  1\right)   &  =0,0\leq j\leq m-1\nonumber
\end{align}

\begin{proposition}
\label{diffeqmts}Suppose $f$ is a solution defined on $(0,1]$ of (\ref{Fsys})
then for $n\geq0$%
\[
\int_{0}^{1}x^{n}f\left(  x\right)  dx=\prod\limits_{i=1}^{m}\frac{\left(
u_{i}\right)  _{n}}{\left(  v_{i}\right)  _{n}}\int_{0}^{1}f\left(  x\right)
dx.
\]

\end{proposition}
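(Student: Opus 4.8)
The plan is to reduce everything to a one-step recurrence for the moments $M_n:=\int_0^1 x^n f(x)\,dx$. Precisely, I would aim to prove
\[
M_{n+1}=\left(\prod_{j=1}^m\frac{n+u_j}{n+v_j}\right)M_n .
\]
Since $(u_j)_{n+1}/(u_j)_n=n+u_j$ and likewise for $v_j$, iterating this identity from $M_0$ and using $(u_j)_0=(v_j)_0=1$ yields $M_n=\prod_{j=1}^m\frac{(u_j)_n}{(v_j)_n}\,M_0$, which is exactly the assertion. So the whole proof hinges on establishing the displayed recurrence.

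To obtain it I would start from the differential equation. Writing $T(u,v)f=0$ in the form
\[
\prod_{j=1}^m\left(D+1-u_j\right)f(x)=x\prod_{j=1}^m\left(D+2-v_j\right)f(x),
\]
multiplying by $x^n$ and integrating over $(0,1]$ converts the extra factor of $x$ on the right into a shift of the exponent:
\[
\int_0^1 x^n\prod_{j=1}^m\left(D+1-u_j\right)f(x)\,dx=\int_0^1 x^{n+1}\prod_{j=1}^m\left(D+2-v_j\right)f(x)\,dx .
\]
Because the operators $D+s$ commute, I can now peel off the factors one at a time with the Lemma. On the left I take $s=1-u_j$ against the exponent $n$, so each application contributes $s-n-1=-(n+u_j)$; on the right I take $s=2-v_j$ against the exponent $n+1$, so each contributes $s-(n+1)-1=-(n+v_j)$. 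After $m$ applications on each side this gives
\[
(-1)^m\Big(\prod_{j=1}^m(n+u_j)\Big)M_n=(-1)^m\Big(\prod_{j=1}^m(n+v_j)\Big)M_{n+1},
\]
and cancelling $(-1)^m$ is precisely the recurrence above.

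The step requiring care, and the one I would regard as the main obstacle, is the legitimacy of each of the $m$ iterations of the Lemma: its integral identity applies only when the function being differentiated vanishes at $x=1$ (the case $k\ge 0$), while its first part guarantees that applying one factor $D+s$ lowers the order of vanishing at $1$ by exactly one. The initial conditions $f^{(j)}(1)=0$ for $0\le j\le m-1$ give $f$ vanishing order $k=m-1$, so the partial products $\prod_{j=1}^{\ell}(D+1-u_j)f$ and $\prod_{j=1}^{\ell}(D+2-v_j)f$ still vanish at $1$ for every $0\le \ell\le m-1$; this is exactly enough to justify removing all $m$ factors on each side, the last removal using only the surviving condition $k=0$. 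The one remaining routine point is that the boundary contributions at $x=0$ vanish and the integrals converge, which holds for the solutions of interest since they behave like $O(x^{u_i-1})$ with $u_i>0$ near the origin. Granting this, the recurrence holds for all $n\ge 0$ and the proposition follows by induction on $n$.
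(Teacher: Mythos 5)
Your proposal is correct and follows essentially the same route as the paper: both reduce the statement to the one-step moment recurrence $M_{n+1}=\prod_{j}\frac{n+u_j}{n+v_j}M_n$ by integrating $x^nT(u,v)f=0$ and peeling off the factors $D+s$ one at a time via the Lemma, tracking how the order of vanishing at $x=1$ drops by one with each factor so that the final application still has the required $k\ge 0$. The only addition beyond the paper's argument is your (reasonable) remark about convergence near $x=0$, which the paper leaves implicit.
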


\begin{proof}
For $0\leq j\leq m$ let $h_{j}=\prod_{i=1}^{j}\left(  D+1-u_{i}\right)  f$,
thus $h_{j+1}=\left(  D+1-u_{j+1}\right)  h_{j}$ and by the Lemma
$h_{j}^{\left(  k\right)  }\left(  1\right)  =0$ for $0\leq k\leq m-1-j$. Also
$\int_{0}^{1}x^{n}h_{j+1}\left(  x\right)  dx=-\left(  n+u_{j+1}\right)
\int_{0}^{1}x^{n}h_{j}\left(  x\right)  dx$ for $0\leq j\leq m-1$. By
induction $\int_{0}^{1}x^{n}h_{m}\left(  x\right)  dx=\left(  -1\right)
^{m}\prod_{i=1}^{m}\left(  n+u_{i}\right)  \int_{0}^{1}x^{n}f\left(  x\right)
dx$.

Similarly $\int_{0}^{1}x^{n}x\prod_{i=1}^{j}\left(  D+2-v_{i}\right)  f\left(
x\right)  dx=\left(  -1\right)  ^{m}\prod_{i=1}^{m}\left(  n+v_{i}\right)
\int_{0}^{1}x^{n+1}f\left(  x\right)  dx$. Thus the integral $0=\int_{0}%
^{1}x^{n}T\left(  u,v\right)  f\left(  x\right)  dx$ implies the recurrence%
\[
\int_{0}^{1}x^{n+1}f\left(  x\right)  dx=\prod_{i=1}^{m}\frac{\left(
u_{i}+n\right)  }{\left(  v_{i}+n\right)  }\int_{0}^{1}x^{n}f\left(  x\right)
dx.
\]
Induction completes the proof.
\end{proof}

Observe that the coefficients $\left\{  \gamma_{i}\right\}  $ of the solution
$\sum_{i=1}^{m}\gamma_{i}f_{i}\left(  x\right)  $ of the system are not
explicit here, but they are found in the inverse Mellin transform expression.

\section{The behavior near $x=1$ and the recurrence}

First we establish the form of the density $f\left(  x\right)  $ in terms of
powers of $\left(  1-x\right)  $.

\begin{lemma}
\label{inttxt}For $\alpha,\beta,\gamma>0$ and $0<x\leq1$%
\[
\int_{x}^{1}t^{\alpha-1}\left(  1-t\right)  ^{\beta-1}\left(  1-\frac{x}%
{t}\right)  ^{\gamma-1}dt=B\left(  \beta,\gamma\right)  \left(  1-x\right)
^{\beta+\gamma-1}~_{2}F_{1}\left(
\genfrac{}{}{0pt}{}{\gamma-\alpha,\beta}{\beta+\gamma}%
;1-x\right)  .
\]

\end{lemma}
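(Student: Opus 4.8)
The plan is to evaluate the integral directly by a substitution that turns the left-hand side into a Beta-type integral, then recognize the result as a Gauss hypergeometric series. First I would substitute $t = 1-(1-x)s$ (equivalently $s=\frac{1-t}{1-x}$), which maps the interval $t\in[x,1]$ to $s\in[0,1]$ and gives $dt=-(1-x)\,ds$, $1-t=(1-x)s$. This immediately produces the factor $(1-t)^{\beta-1}=(1-x)^{\beta-1}s^{\beta-1}$ and, after handling the remaining factors, should expose the overall power $(1-x)^{\beta+\gamma-1}$ together with a $\int_0^1 s^{\beta-1}(1-s)^{\gamma-1}(\cdots)\,ds$ structure.

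The two remaining factors $t^{\alpha-1}$ and $\left(1-\frac{x}{t}\right)^{\gamma-1}$ need to be rewritten in terms of $s$. Note that $1-\frac{x}{t}=\frac{t-x}{t}$ and $t-x=(1-x)-(1-x)s=(1-x)(1-s)$, so $1-\frac{x}{t}=\frac{(1-x)(1-s)}{t}$. Thus the integrand becomes
\[
t^{\alpha-1}\cdot(1-x)^{\beta-1}s^{\beta-1}\cdot\frac{(1-x)^{\gamma-1}(1-s)^{\gamma-1}}{t^{\gamma-1}}\cdot(1-x)=(1-x)^{\beta+\gamma-1}s^{\beta-1}(1-s)^{\gamma-1}t^{\alpha-\gamma}.
\]
The single awkward factor that survives is $t^{\alpha-\gamma}=\bigl(1-(1-x)s\bigr)^{\alpha-\gamma}$. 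Everything else has already assembled into the desired prefactor $(1-x)^{\beta+\gamma-1}$ and a Beta kernel $s^{\beta-1}(1-s)^{\gamma-1}$.

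The final step is to expand $\bigl(1-(1-x)s\bigr)^{\alpha-\gamma}$ by the binomial series and integrate term by term. Writing $\bigl(1-(1-x)s\bigr)^{-(\gamma-\alpha)}=\sum_{n=0}^{\infty}\frac{(\gamma-\alpha)_n}{n!}(1-x)^n s^n$ and using $\int_0^1 s^{\beta+n-1}(1-s)^{\gamma-1}\,ds=B(\beta+n,\gamma)=\frac{(\beta)_n}{(\beta+\gamma)_n}B(\beta,\gamma)$, the sum collapses to $B(\beta,\gamma)\sum_{n=0}^{\infty}\frac{(\gamma-\alpha)_n(\beta)_n}{(\beta+\gamma)_n\,n!}(1-x)^n$, which is exactly $B(\beta,\gamma)\,{}_2F_1\!\left(\gamma-\alpha,\beta;\beta+\gamma;1-x\right)$.

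The main obstacle I anticipate is justifying the interchange of summation and integration: the binomial series for $\bigl(1-(1-x)s\bigr)^{\alpha-\gamma}$ converges for $|(1-x)s|<1$, and since $0<x\le 1$ and $s\in[0,1]$ we have $(1-x)s<1$ except possibly at the endpoint where $x=0$ is excluded anyway, so on $0<x\le1$ the convergence is fine and uniform on compact subsets of $s\in[0,1)$; a dominated-convergence or uniform-convergence argument on $s\in[0,1-\varepsilon]$ followed by a limit handles the integrable singularity at $s=1$ contributed by $(1-s)^{\gamma-1}$. Once that interchange is secured, recognizing the resulting series as the Gauss ${}_2F_1$ is immediate from the definition $\sum_n \frac{(a)_n(b)_n}{(c)_n n!}z^n$.
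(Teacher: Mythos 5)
Your proof is correct and follows essentially the same route as the paper: the same substitution $t=1-(1-x)s$ reduces the integral to $(1-x)^{\beta+\gamma-1}\int_0^1 (1-(1-x)s)^{\alpha-\gamma}s^{\beta-1}(1-s)^{\gamma-1}\,ds$, which the paper then identifies via the standard Euler integral representation of $_2F_1$ (citing Lebedev (9.1.4)) while you rederive that representation by term-by-term binomial expansion. The extra convergence discussion is sound (indeed, since $|(1-x)s|\le 1-x<1$ the series converges uniformly on all of $s\in[0,1]$), so no gap remains.
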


\begin{proof}
Change the variable of integration $t=1-s+sx$ then the integral becomes%
\begin{align*}
&  \left(  1-x\right)  ^{\beta+\gamma-1}\int_{0}^{1}\left(  1-s\left(
1-x\right)  \right)  ^{\alpha-\gamma}s^{\beta-1}\left(  1-s\right)
^{\gamma-1}ds\\
&  =\left(  1-x\right)  ^{\beta+\gamma-1}\frac{\Gamma\left(  \beta\right)
\Gamma\left(  \gamma\right)  }{\Gamma\left(  \beta+\gamma\right)  }~_{2}%
F_{1}\left(
\genfrac{}{}{0pt}{}{\gamma-\alpha,\beta}{\beta+\gamma}%
;1-x\right)  .
\end{align*}
This is a standard formula, see \cite[(9.1.4), p.239]{LE} and is valid in
$0<x\leq1$ (where $\left\vert 1-x\right\vert <1$).
\end{proof}

Set $\delta:=\sum_{i=1}^{m}\left(  v_{i}-u_{i}\right)  $.

\begin{proposition}
\label{series1-x}There exists a sequence $\left\{  c_{n}\right\}  $ such that
\[
f\left(  x\right)  =\frac{1}{\Gamma\left(  \delta\right)  }\prod_{i=1}%
^{m}\frac{\Gamma\left(  v_{i}\right)  }{\Gamma\left(  u_{i}\right)  }\left(
1-x\right)  ^{\delta-1}\left\{  1+\sum_{n=1}^{\infty}c_{n}\left(  1-x\right)
^{n}\right\}  .
\]

\end{proposition}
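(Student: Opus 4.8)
The plan is to induct on $m$, realizing $f$ as the convolution of the $m$ Beta densities $h(u_i,v_i-u_i;\cdot)$ and evaluating each convolution with Lemma \ref{inttxt}. Writing $K_m:=\frac{1}{\Gamma(\delta)}\prod_{i=1}^m\frac{\Gamma(v_i)}{\Gamma(u_i)}$, I would prove a statement slightly sharper than the Proposition: for every $m$ the density admits a representation $f(x)=(1-x)^{\delta-1}\Phi(1-x)$ with $\Phi$ analytic on $|1-x|<1$ and $\Phi(0)=K_m$, so that reading off the Taylor coefficients of $\Phi/\Phi(0)$ supplies the $c_n$. The base case $m=1$ is the single Beta density $\frac{1}{B(u_1,\delta)}x^{u_1-1}(1-x)^{\delta-1}$ with $\delta=v_1-u_1$: the prefactor is exactly $K_1$, and $x^{u_1-1}=(1-(1-x))^{u_1-1}$ is the required analytic factor (a binomial series in $1-x$ of radius $1$) with value $1$ at $x=1$.

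For the inductive step I would write $f=f'\ast h$, where $f'$ is the density of the first $m-1$ factors and $h=h(u_m,\delta_m;\cdot)$ with $\delta_m:=v_m-u_m$, so $\delta=\delta'+\delta_m$ and $\delta':=\sum_{i=1}^{m-1}(v_i-u_i)$. Substituting the inductive form $f'(t)=(1-t)^{\delta'-1}\Phi'(1-t)$ into $f(x)=\int_x^1 f'(t)\,h(x/t)\,\frac{dt}{t}$, expanding $\Phi'(1-t)=K_{m-1}\sum_{n\ge0}c'_n(1-t)^n$ with $c'_0=1$, and pulling out $x^{u_m-1}$ gives
\[
f(x)=\frac{x^{u_m-1}K_{m-1}}{B(u_m,\delta_m)}\sum_{n\ge0}c'_n\int_x^1 t^{-u_m}(1-t)^{\delta'+n-1}\Bigl(1-\tfrac{x}{t}\Bigr)^{\delta_m-1}dt.
\]
Each integral is handled by Lemma \ref{inttxt} with $\alpha=1-u_m$, $\beta=\delta'+n$, $\gamma=\delta_m$, yielding $B(\delta'+n,\delta_m)(1-x)^{\delta+n-1}\,{}_2F_1\bigl(v_m-1,\delta'+n;\delta+n;1-x\bigr)$. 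Since $x^{u_m-1}$ and each ${}_2F_1$ are power series in $(1-x)$ with constant term $1$, and every summand carries the factor $(1-x)^{\delta+n-1}$, the whole expression is $(1-x)^{\delta-1}$ times a power series in $(1-x)$; its constant term arises only from $n=0$ and equals $K_{m-1}B(\delta',\delta_m)/B(u_m,\delta_m)$, which by $B(\delta',\delta_m)=\Gamma(\delta')\Gamma(\delta_m)/\Gamma(\delta)$ and $B(u_m,\delta_m)=\Gamma(u_m)\Gamma(\delta_m)/\Gamma(v_m)$ collapses to $K_m$. This closes the induction and, at the full index set, gives the stated form.

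Two technical points need care. First, Lemma \ref{inttxt} is stated for $\alpha>0$ yet is applied with $\alpha=1-u_m\le0$ when $u_m\ge1$; I would remove the restriction by noting that, for fixed $x\in(0,1]$, both sides are analytic in $\alpha$ (the integrand is bounded near $t=x$ and $t=1$ by the factors governed by $\beta,\gamma>0$), so the identity persists by analytic continuation in $\alpha$. Second, and this is the main obstacle, one must justify the interchange of summation and integration and then rearrange a double series into a single power series in $(1-x)$. The interchange is legitimate because $\Phi'$ has radius of convergence at least $1$ (the only other singularity of $f'$ sits at $t=0$), hence its series converges uniformly on $1-t\in[0,1-x]$ for each fixed $x>0$; the ensuing rearrangement is an absolutely convergent double series for $1-x$ small, and the resulting single series in $(1-x)$ has radius of convergence $1$, giving validity on $0<x\le1$ as claimed. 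I would devote most of the proof to discharging this convergence bookkeeping, the algebra for the leading constant being routine.
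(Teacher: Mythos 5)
Your proposal is correct and follows essentially the same route as the paper: induction on $m$, realizing the new density as a convolution with a Beta factor, evaluating each term with Lemma \ref{inttxt}, and identifying the leading coefficient of $(1-x)^{\delta-1}$ from the $n=0$ term. The two technical points you flag (applying the Lemma with $\alpha=1-u_m\le 0$, and the summation--integration interchange) are indeed passed over silently in the paper, and your proposed fixes are sound --- in fact the Lemma's own proof never uses $\alpha>0$, only $\beta,\gamma>0$.
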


\begin{proof}
Argue by induction. For $m=1$ we have (see (\ref{fbeta}))%
\begin{align*}
f\left(  x\right)   &  =\frac{\Gamma\left(  v_{1}\right)  }{\Gamma\left(
u_{1}\right)  \Gamma\left(  v_{1}-u_{1}\right)  }x^{u_{1}-1}\left(
1-x\right)  ^{v_{1}-u_{1}-1}\\
&  =\frac{\Gamma\left(  v_{1}\right)  }{\Gamma\left(  u_{1}\right)
\Gamma\left(  v_{1}-u_{1}\right)  }\left(  1-x\right)  ^{v_{1}-u_{1}%
-1}\left\{  1+\sum_{n=1}^{\infty}\frac{\left(  1-u_{1}\right)  _{n}}%
{n!}\left(  1-x\right)  ^{n}\right\}  .
\end{align*}
Assume the statement is proven for some $m\geq1$, then $g=f\ast h\left(
u_{m+1},v_{m+1}-u_{m+1};\cdot\right)  $ has the moments $\prod\limits_{i=1}%
^{m+1}\frac{\left(  u_{i}\right)  _{n}}{\left(  v_{i}\right)  _{n}}$. The
convolution integral (see (\ref{convol})) is a sum of terms%
\begin{align*}
&  C_{n}\int_{x}^{1}\left(  1-t\right)  ^{\delta+n-1}\left(  \frac{x}%
{t}\right)  ^{u_{m+1}-1}\left(  1-\frac{x}{t}\right)  ^{v_{m+1}-u_{m+1}%
-1}\frac{dt}{t}\\
&  =C_{n}x^{u_{m+1}-1}\left(  1-x\right)  ^{\delta+n+v_{m+1}-u_{m+1}-1}%
\frac{\Gamma\left(  \delta+n\right)  \Gamma\left(  v_{m+1}-u_{m+1}\right)
}{\Gamma\left(  \delta+n+v_{m+1}-u_{m+1}\right)  }\\
&  \times~_{2}F_{1}\left(
\genfrac{}{}{0pt}{}{\delta+n,v_{m+1}-1}{\delta+v_{m+1}-u_{m+1}+n}%
;1-x\right)
\end{align*}
by Lemma \ref{inttxt} ; and $x^{u_{m+1}-1}=1+\sum_{j=1}^{\infty}\frac{\left(
1-u_{m+1}\right)  _{j}}{j!}\left(  1-x\right)  ^{j}$. Thus the lowest power of
$\left(  1-x\right)  $ appearing in $g$ is $\delta+v_{m+1}-u_{m+1}-1$ which
occurs for $n=0$. By the inductive hypothesis%
\[
C_{0}=\frac{1}{\Gamma\left(  \delta\right)  }\prod_{i=1}^{m}\frac
{\Gamma\left(  v_{i}\right)  }{\Gamma\left(  u_{i}\right)  }\frac
{\Gamma\left(  v_{m+1}\right)  }{\Gamma\left(  u_{m+1}\right)  \Gamma\left(
v_{m+1}-u_{m+1}\right)  },
\]
and so the coefficient of $\left(  1-x\right)  ^{\delta+v_{m+1}-u_{m+1}-1}$ in
$g$ is%
\[
C_{0}\frac{\Gamma\left(  \delta\right)  \Gamma\left(  v_{m+1}-u_{m+1}\right)
}{\Gamma\left(  \delta+v_{m+1}-u_{m+1}\right)  }=\frac{1}{\Gamma\left(
\delta+v_{m+1}-u_{m+1}\right)  }\prod_{i=1}^{m+1}\frac{\Gamma\left(
v_{i}\right)  }{\Gamma\left(  u_{i}\right)  };
\]
this completes the induction.
\end{proof}

For the next step we need to express $T\left(  u,v\right)  $ in the form
$x^{m}\left(  1-x\right)  \partial_{x}^{m}+\sum_{j=0}^{m-1}x^{j}\left(
a_{j}-b_{j}x\right)  \partial_{x}^{j}$. Recall the elementary symmetric
polynomials in the variables $\left\{  z_{1},\ldots,z_{m}\right\}  $ given by
the generating function%
\[
\prod_{j=1}^{m}\left(  q+z_{j}\right)  =\sum_{n=0}^{m}e_{n}\left(  z\right)
q^{m-n},
\]
so $e_{0}\left(  z\right)  =1,e_{1}\left(  z\right)  =z_{1}+z_{2}+\ldots
+z_{m},e_{2}\left(  z\right)  =z_{1}z_{2}+\ldots+z_{m-1}z_{m}$ and
$e_{m}\left(  z\right)  =z_{1}z_{2}\ldots z_{m}$. Thus%
\[
\prod\limits_{j=1}^{m}\left(  D+1-u_{i}\right)  =\sum_{j=0}^{m}e_{m-j}\left(
1-u\right)  \left(  x\partial_{x}\right)  ^{j}=\sum_{j=0}^{m}a_{j}%
x^{j}\partial_{x}^{j}.
\]
Let $\left(  x\partial_{x}\right)  ^{k}=\sum_{i=0}^{j}A_{k,i}x^{i}\partial
_{x}^{i}$ then $x\partial_{x}\left(  x\partial_{x}\right)  ^{j}=\sum_{i=0}%
^{j}A_{k,i}\left(  ix^{i}\partial_{x}^{i}+x^{i+1}\partial_{x}^{i+1}\right)  $,
so $A_{k+1,i}=A_{k,i-1}+iA_{k,i}$. This recurrence has the boundary values
$A_{0,0}=1,A_{1,0}=0,A_{1,1}=1$. The solution consists of the Stirling numbers
of the second kind, denoted $S\left(  k,i\right)  $ (see \cite[26.8.22]%
{DLMF}). Thus
\begin{align*}
\sum_{j=0}^{m}a_{j}x^{j}\partial_{x}^{j} &  =\sum_{j=0}^{m}e_{m-j}\left(
1-u\right)  \sum_{i=0}^{j}S\left(  j,i\right)  x^{i}\partial_{x}^{i}\\
&  =\sum_{j=0}^{m}x^{j}\partial_{x}^{j}\sum_{i=j}^{m}S\left(  i,j\right)
e_{m-i}\left(  1-u\right)  ,\\
a_{j} &  =\sum_{i=j}^{m}S\left(  i,j\right)  e_{m-i}\left(  1-u\right)  ,0\leq
j\leq m.
\end{align*}
In particular $a_{m}=1,$ $a_{m-1}=\binom{m}{2}+e_{1}\left(  1-u\right)  $, and
$a_{0}=e_{m}\left(  1-u\right)  =\prod_{j=1}^{m}\left(  1-u_{j}\right)  $.
Similarly%
\begin{align*}
\sum_{j=0}^{m}b_{j}x^{j}\partial_{x}^{j} &  =\prod\limits_{j=1}^{m}\left(
D+2-v_{j}\right)  =\sum_{j=0}^{m}e_{m-j}\left(  2-v\right)  \left(
x\partial_{x}\right)  ^{j}\\
&  =\sum_{j=0}^{m}x^{j}\partial_{x}^{j}\sum_{i=j}^{m}S\left(  i,j\right)
e_{m-j}\left(  2-v\right)  ,\\
b_{j} &  =\sum_{i=j}^{m}S\left(  i,j\right)  e_{m-j}\left(  2-v\right)  ,0\leq
j\leq m.
\end{align*}

The differential equation leads to deriving recurrence relations for the
coefficients $\left\{  c_{n}\right\}  $. Convert the differential operator
$T\left(  u,v\right)  $ to the coordinate $t=1-x$; set $\partial_{t}:=\frac
{d}{dt}$ (so that $\partial_{t}=-\partial_{x}$). Write (expanding
$x^{j}=\left(  1-t\right)  ^{j}$ with the binomial theorem)%
\begin{align*}
T\left(  u,v\right)   &  =-x\sum_{j=0}^{m}b_{j}x^{j}\partial_{x}^{j}%
+\sum_{j=0}^{m}a_{j}x^{j}\partial_{x}^{j}\\
&  =-\sum_{j=0}^{m}\sum_{i=0}^{j+1}\binom{j+1}{i}\left(  -t\right)  ^{i}%
b_{j}\left(  -\partial_{t}\right)  ^{j}+\sum_{j=0}^{m}\sum_{i=0}^{j}\binom
{j}{i}\left(  -t\right)  ^{i}a_{j}\left(  -\partial_{t}\right)  ^{j}\\
&  =\sum_{k=-m}^{1}\left(  -1\right)  ^{k}\sum_{j=\max\left(  0,-k\right)
}^{m}\left\{  \binom{j}{j+k}a_{j}-\binom{j+1}{j+k}b_{j}\right\}
t^{k+j}\partial_{t}^{j}.
\end{align*}
The highest order term ($k=1$) is $\sum_{j=0}^{m}b_{j}t^{j+1}\partial_{t}^{j}%
$. The term with $k=0$ is $\sum_{j=0}^{m}\left(  a_{j}-\left(  j+1\right)
b_{j}\right)  t^{j}\partial_{t}^{j}$. The two bottom terms ($k=-m,1-m$) are%
\begin{align*}
\left(  -1\right)  ^{m}\left(  a_{m}-b_{m}\right)  \partial_{t}^{m}  &  =0,\\
\left(  -1\right)  ^{m-1}\left(  a_{m-1}-b_{m-1}-t\partial_{t}\right)
\partial_{t}^{m-1}  &  =\left(  -1\right)  ^{m-1}\left(  \sum_{i=1}^{m}\left(
v_{i}-u_{i}\right)  -m-t\partial_{t}\right)  \partial_{t}^{m-1};
\end{align*}
the remaining terms ($-m<-k<0$) are
\[
\left(  -1\right)  ^{k}\left(  \sum_{j=k}^{m}\left(  \binom{j}{j-k}%
a_{j}-\binom{j+1}{j-k}b_{j}\right)  t^{j-k}\partial_{t}^{j-k}\right)
\partial_{t}^{k}.
\]
Apply $T\left(  u,v\right)  $ to $t^{\gamma}$ (with the aim of finding a
solution $t^{c}\sum_{n=0}^{\infty}c_{n}t^{n}$ to $T\left(  u,v\right)  f=0$);
note $\partial_{t}^{j}t^{\gamma}=\left(  -1\right)  ^{j}\left(  -\gamma
\right)  _{j}t^{\gamma-j}$, then%
\begin{align*}
T\left(  u,v\right)  t^{\gamma}  &  =\sum_{k=-1}^{m-1}R_{k}\left(
\gamma\right)  t^{\gamma-k},\\
R_{-1}\left(  \gamma\right)   &  =\sum_{j=0}^{m}b_{j}\left(  -1\right)
^{j}\left(  -\gamma\right)  _{j},\\
R_{m-1}\left(  \gamma\right)   &  =\left(  -\gamma\right)  _{m-1}\left(
\sum_{i=1}^{m}\left(  v_{i}-u_{i}\right)  -\gamma-1\right)  =\left(
-\gamma\right)  _{m-1}\left(  \delta-1-\gamma\right)  ,
\end{align*}
and
\begin{align*}
R_{k}\left(  \gamma\right)   &  =\left(  -\gamma\right)  _{k}R_{k}^{\prime
}\left(  \gamma\right)  ,\\
R_{k}^{\prime}  &  =\left(  \sum_{j=k}^{m}\left(  \binom{j}{j-k}a_{j}%
-\binom{j+1}{j-k}b_{j}\right)  \left(  -1\right)  ^{j-k}\left(  -\gamma
+k\right)  _{j-k}\right)  ,0\leq k<m.
\end{align*}
These sums can be considerably simplified (and the Stirling numbers are not
needed). Introduce the difference operator
\[
\nabla g\left(  c\right)  =g\left(  c\right)  -g\left(  c-1\right)  .
\]
This has a convenient action, for $j\geq0$ and arbitrary $k$
\begin{align*}
\nabla\left(  k-c\right)  _{j}  &  =\left(  k-c\right)  _{j}-\left(
k+1-c\right)  _{j}=\left\{  k-c-\left(  k+j-c\right)  \right\}  \left(
k+1-c\right)  _{j-1}\\
&  =-j\left(  k+1-c\right)  _{j-1},\\
\nabla^{k}\left(  -c\right)  _{j}  &  =\left(  -1\right)  ^{k}\frac
{j!}{\left(  j-k\right)  !}\left(  -c+k\right)  _{j-k},k\leq j.
\end{align*}
Define the polynomials
\begin{align*}
p\left(  c\right)   &  =\prod_{i=1}^{m}\left(  c+1-u_{i}\right)  ,q\left(
c\right)  =\prod_{i=1}^{m}\left(  c+2-v_{i}\right) \\
q_{1}\left(  c\right)   &  =\left(  1+c\nabla\right)  q\left(  c\right)
=\left(  1+c\right)  q\left(  c\right)  -cq\left(  c-1\right)  .
\end{align*}

\begin{proposition}
$R_{-1}\left(  \gamma\right)  =q\left(  \gamma\right)  $ and for $0\leq k\leq
m-1$%
\[
R_{k}^{\prime}\left(  \gamma\right)  =\frac{1}{k!}\nabla^{k}p\left(
\gamma\right)  -\frac{1}{\left(  k+1\right)  !}\nabla^{k}q_{1}\left(
\gamma\right)  .
\]

\end{proposition}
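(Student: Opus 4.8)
The plan is to recognize $a_j$ and $b_j$ as the coefficients of $p$ and $q$ in the \emph{falling-factorial} basis, after which every sum defining $R_{-1}$ and $R_k'$ becomes a finite difference of $p$ or $q$. The starting observation is that $x^j\partial_x^j=D(D-1)\cdots(D-j+1)$ (check on $x^s$, where it gives $s(s-1)\cdots(s-j+1)x^s$), so the already-computed expansions $\prod_i(D+1-u_i)=\sum_j a_j x^j\partial_x^j$ and $\prod_i(D+2-v_i)=\sum_j b_j x^j\partial_x^j$ say exactly that
\[
p(c)=\sum_{j=0}^m a_j\,c^{\underline{j}},\qquad q(c)=\sum_{j=0}^m b_j\,c^{\underline{j}},\qquad c^{\underline{j}}:=c(c-1)\cdots(c-j+1).
\]
I would also record two elementary facts: the conversion $(-1)^{n}(-\gamma+k)_{n}=(\gamma-k)^{\underline{n}}$ between the Pochhammer symbol and the falling factorial, and $\nabla^k c^{\underline{j}}=j^{\underline{k}}(c-k)^{\underline{j-k}}$, which follows by iterating $\nabla c^{\underline{j}}=j(c-1)^{\underline{j-1}}$ (this is the displayed formula for $\nabla^k(-c)_j$ in disguise).

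With this dictionary the statement $R_{-1}(\gamma)=q(\gamma)$ is immediate: $(-1)^{j}(-\gamma)_j=\gamma^{\underline{j}}$ turns $R_{-1}(\gamma)=\sum_j b_j(-1)^{j}(-\gamma)_j$ into $\sum_j b_j\gamma^{\underline{j}}=q(\gamma)$. The $p$-part of $R_k'$ is equally direct: writing $\binom{j}{j-k}=\binom{j}{k}$ and applying the Pochhammer conversion, the first sum in $R_k'$ is $\sum_{j\ge k}\binom{j}{k}a_j(\gamma-k)^{\underline{j-k}}=\tfrac1{k!}\sum_{j}a_j\,j^{\underline{k}}(\gamma-k)^{\underline{j-k}}=\tfrac1{k!}\nabla^k p(\gamma)$.

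The genuine work is the $q_1$-part, i.e.\ showing the second sum in $R_k'$ equals $\tfrac1{(k+1)!}\nabla^k q_1(\gamma)$. Since $q_1(c)=q(c)+c\,\nabla q(c)$, I would first establish the commutation rule $\nabla^k M_c=M_c\nabla^k+k\,E^{-1}\nabla^{k-1}$, where $M_c$ is multiplication by $c$ and $E^{-1}$ the backward shift, proved by induction from the product rule $\nabla(c\,g)=c\,\nabla g+g(\cdot-1)$. Applying this to $\nabla^k q_1=\nabla^k q+\nabla^k M_c(\nabla q)$ gives
\[
\nabla^k q_1(\gamma)=\nabla^k q(\gamma)+\gamma\,\nabla^{k+1}q(\gamma)+k\,\nabla^k q(\gamma-1).
\]
Substituting $q=\sum_j b_j c^{\underline{j}}$ together with $\nabla^k c^{\underline{j}}=j^{\underline{k}}(c-k)^{\underline{j-k}}$, the coefficient of $b_j$ (after pulling out the common factor $j^{\underline{k}}$ and setting $w=\gamma-k$, $n=j-k$) reduces to the single polynomial identity
\[
w^{\underline{n}}+(w+k)\,n\,(w-1)^{\underline{n-1}}+k\,(w-1)^{\underline{n}}=(k+n+1)\,w^{\underline{n}},
\]
which collapses after factoring $(w-1)^{\underline{n-1}}$ out of all three terms (the two $kn$ contributions cancel). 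Since $(k+n+1)\,j^{\underline{k}}=(j+1)^{\underline{k+1}}=(k+1)!\binom{j+1}{k+1}$, this yields $\tfrac1{(k+1)!}\nabla^k q_1(\gamma)=\sum_j\binom{j+1}{k+1}b_j(\gamma-k)^{\underline{j-k}}$, which is exactly the second sum of $R_k'$ (using $\binom{j+1}{j-k}=\binom{j+1}{k+1}$).

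I expect the main obstacle to be precisely this $q_1$ term: one must get the commutation of $\nabla^k$ with multiplication by $c$ exactly right, since it is the extra summand $k\,E^{-1}\nabla^{k-1}$ that upgrades the binomial from $\binom{j}{k}$ to $\binom{j+1}{k+1}$, and then verify that the three resulting contributions telescope into a single falling factorial. Everything outside this step is bookkeeping once $a_j,b_j$ are identified as falling-factorial coefficients of $p,q$.
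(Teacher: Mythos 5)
Your proof is correct, and the overall frame (reading $a_j,b_j$ as the coefficients of $p,q$ in the falling-factorial basis and then applying the displayed formula for $\nabla^k(-\gamma)_j$) is exactly the paper's. The one place you diverge is the step you yourself flag as the genuine work, the $q_1$ term, and there the paper has a shorter route: instead of commuting $\nabla^k$ past multiplication by $\gamma$, it first observes that $\left(1+\gamma\nabla\right)\left(-\gamma\right)_j=\left(1+j\right)\left(-\gamma\right)_j$, i.e.\ each basis element $(-\gamma)_j$ is an eigenfunction of $1+\gamma\nabla$ with eigenvalue $1+j$. Hence $q_1(\gamma)=\sum_j(-1)^jb_j(1+j)(-\gamma)_j$ has the same expansion as $q$ with $b_j$ replaced by $(1+j)b_j$, and the computation already done for $p$ applies verbatim, the extra factor $1+j$ together with the normalization $\frac{1}{(k+1)!}$ producing $\binom{j+1}{j-k}$ directly. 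Your version — the commutation rule $\nabla^kM_c=M_c\nabla^k+kE^{-1}\nabla^{k-1}$ followed by the three-term telescoping identity in $w=\gamma-k$, $n=j-k$ — is sound (I checked the identity: factoring out $(w-1)(w-2)\cdots(w-n+1)$ leaves $w+(w+k)n+k(w-n)=(k+n+1)w$), but it costs an induction and a polynomial verification where the paper spends one line. The trade-off is that your commutation lemma is a reusable general fact about $\nabla$, whereas the paper's eigenfunction observation is tailored to the Pochhammer basis; for this proposition the latter is the more economical choice.
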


\begin{proof}
By construction $\sum_{j=0}^{m}a_{j}t^{j}\partial_{t}^{j}t^{\gamma}=p\left(
\gamma\right)  $ and $\sum_{j=0}^{m}b_{j}t^{j}\partial_{t}^{j}t^{\gamma
}=q\left(  \gamma\right)  $. Apply $\frac{1}{k!}\nabla^{k}$ to both sides
($\nabla$ acts on the variable $\gamma$) of%
\[
p\left(  \gamma\right)  =\sum_{j=0}^{m}a_{j}t^{j}\partial_{t}^{j}t^{\gamma
}=\sum_{j=0}^{m}\left(  -1\right)  ^{j}a_{j}\left(  -\gamma\right)  _{j},
\]
to obtain%
\begin{align*}
\frac{1}{k!}\nabla^{k}p\left(  \gamma\right)   &  =\frac{1}{k!}\sum_{j=k}%
^{m}\left(  -1\right)  ^{j-k}\frac{j!}{\left(  j-k\right)  !}a_{j}\left(
-\gamma+k\right)  _{j-k}\\
&  =\sum_{j=k}^{m}\left(  -1\right)  ^{j-k}\binom{j}{j-k}a_{j}\left(
-\gamma+k\right)  _{j-k}.
\end{align*}
Also%
\begin{align*}
q_{1}\left(  \gamma\right)   &  =\left(  1+\gamma\nabla\right)  q\left(
\gamma\right)  =\left(  1+\gamma\nabla\right)  \sum_{j=0}^{m}b_{j}%
t^{j}\partial_{t}^{j}t^{\gamma}=\left(  1+\gamma\nabla\right)  \sum_{j=0}%
^{m}\left(  -1\right)  ^{j}b_{j}\left(  -\gamma\right)  _{j}\\
&  =\sum_{j=0}^{m}\left(  -1\right)  ^{j}b_{j}\left\{  \left(  -\gamma\right)
_{j}-j\gamma\left(  1-\gamma\right)  _{j-1}\right\}  =\sum_{j=0}^{m}\left(
-1\right)  ^{j}b_{j}\left(  1+j\right)  \left(  -\gamma\right)  _{j}.
\end{align*}
Apply $\frac{1}{\left(  k+1\right)  !}\nabla^{k}$ to both sides to obtain
\begin{align*}
\frac{1}{\left(  k+1\right)  !}\nabla^{k}q_{1}\left(  \gamma\right)   &
=\frac{1}{\left(  k+1\right)  !}\sum_{j=k}^{m}\left(  -1\right)  ^{j-k}%
b_{j}\left(  1+j\right)  \frac{j!}{\left(  j-k\right)  !}\left(
-\gamma+k\right)  _{j-k}\\
&  =\sum_{j=k}^{m}\left(  -1\right)  ^{j-k}\binom{j+1}{j-k}b_{j}\left(
-\gamma+k\right)  _{j-k}.
\end{align*}
This completes the proof.
\end{proof}

Hence%
\begin{align*}
T\left(  u,v\right)  t^{c}\sum_{n=0}^{\infty}c_{n}t^{n}  &  =t^{c}\sum
_{n=0}^{\infty}c_{n}\sum_{k=-1}^{m-1}R_{k}\left(  n+c\right)  t^{n-k}\\
&  =t^{c}\sum_{n=1-m}^{\infty}t^{n}\sum_{k=-1}^{m-1}R_{k}\left(  n+k+c\right)
c_{n+k}.
\end{align*}
The recurrence for the coefficients for $n\geq m$ is%
\begin{gather*}
R_{m-1}\left(  n+c\right)  c_{n}=-\sum_{k=1}^{\min\left(  m,n\right)
}R_{m-1-k}\left(  n-k+c\right)  c_{n-k},\\
\left(  -n-c\right)  _{m-1}\left(  \delta-1-n-c\right)  c_{n}=\\
-\sum_{k=1}^{\min\left(  m-1,n\right)  }\left(  -n+k-c\right)  _{m-1-k}%
R_{m-1-k}^{\prime}\left(  n-k+c\right)  c_{n-k}-q\left(  n-m+c\right)
c_{n-m},
\end{gather*}
where $c_{i}=0$ for $i<0$. At $n=0$ the equation is $\left(  -c\right)
_{m-1}\left(  \delta-1-c\right)  c_{0}=0$. Let $c=0$ then for $0\leq n\leq
m-2$ the equations are%
\[
\left(  -n\right)  _{m-1}\left(  \delta-1-n\right)  c_{n}=-\sum_{k=1}%
^{n}\left(  -n+k\right)  _{m-1-k}R_{m-1-k}^{\prime}\left(  n-k\right)
c_{n-k},
\]
but $\left(  -n+k\right)  _{m-1-k}=0$ for $m-1-k>n-k$ (and $0\leq k\leq n$)
thus the coefficients $c_{0},c_{1},\ldots,c_{m-2}$ are arbitrary, providing
$m-1$ linearly independent solutions to $T\left(  u,v\right)  f=0$. The
recurrence can be rewritten as%
\begin{align*}
c_{m-1}  &  =\frac{-1}{\left(  m-1\right)  !\left(  \delta-m\right)  }%
\sum_{k=1}^{m-1}\left(  -1\right)  ^{k}\left(  m-1-k\right)  !R_{m-1-k}%
^{\prime}\left(  n-k\right)  c_{n-k},\\
c_{n}  &  =\frac{-1}{\left(  \delta-1-n\right)  }\left\{  \sum_{k=1}%
^{m-1}\frac{1}{\left(  -n\right)  _{k}}R_{m-1-k}^{\prime}\left(  n-k\right)
c_{n-k}+\frac{1}{\left(  -n\right)  _{m-1}}q\left(  n-m\right)  c_{n-m}%
\right\}  ,n\geq m.
\end{align*}

Assume that $\delta\notin\mathbb{Z}$ to avoid poles. But these are different
from the desired solution which has $c=\delta-1$ as was shown in Proposition
\ref{series1-x}. The recurrence behaves better in this case. Indeed
\begin{align*}
c_{n}  &  =\frac{1}{n}\sum_{k=1}^{\min\left(  m-1,n\right)  }\frac{\left(
-n+k-\delta+1\right)  _{m-1-k}}{\left(  -n-\delta+1\right)  _{m-1}}%
R_{m-1-k}^{\prime}\left(  n-k+\delta-1\right)  c_{n-k}\\
&  +\frac{1}{n\left(  -n-\delta+1\right)  _{m-1}}q\left(  n-m+\delta-1\right)
c_{n-m},
\end{align*}
which simplifies to
\begin{align}
c_{n}  &  =\frac{1}{n}\sum_{k=1}^{\min\left(  m-1,n\right)  }\frac{1}{\left(
-n-\delta+1\right)  _{k}}R_{m-1-k}^{\prime}\left(  n-k+\delta-1\right)
c_{n-k}\label{recF}\\
&  +\frac{1}{n\left(  -n-\delta+1\right)  _{m-1}}q\left(  n-m+\delta-1\right)
c_{n-m}.\nonumber
\end{align}
The term with $c_{n-m}$ occurs only for $n\geq m$. The denominator factors are
of the form $\left(  \delta+n-1\right)  \left(  \delta+n-2\right)
\ldots\left(  \delta+n-k\right)  $. If $n\geq m$ then the smallest factor is
$\delta+n-m>\delta>0$; otherwise the smallest factor is $\delta$ (for $k=n$).
Hence this solution is well-defined for any $\delta>0$.

\begin{theorem}
\label{thm1mx}Suppose $u_{1},\ldots,u_{m},v_{1},\ldots,v_{m}$ satisfy
$v_{i}>u_{i}>0$ for each $i$ then there is a density function $f$ on $\left[
0,1\right]  $ with moment sequence $\prod\limits_{i=1}^{m}\frac{\left(
u_{i}\right)  _{n}}{\left(  v_{i}\right)  _{n}}$ and%
\[
f\left(  x\right)  =\frac{1}{\Gamma\left(  \delta\right)  }\prod_{i=1}%
^{m}\frac{\Gamma\left(  v_{i}\right)  }{\Gamma\left(  u_{i}\right)  }\left(
1-x\right)  ^{\delta-1}\left\{  1+\sum_{n=1}^{\infty}c_{n}\left(  1-x\right)
^{n}\right\}  ,
\]
where the coefficients $\left\{  c_{n}\right\}  $ are obtained with the
recurrence (\ref{recF}) using $c_{0}=1$, and $\delta=\sum_{i=1}^{m}\left(
v_{i}-u_{i}\right)  $.
\end{theorem}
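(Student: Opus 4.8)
The plan is to assemble the statement from three ingredients already in place: existence of the density via convolution, the $(1-x)$-expansion of Proposition \ref{series1-x}, and the fact that the density is annihilated by $T(u,v)$, after which the recurrence (\ref{recF}) falls out of matching powers of $1-x$.

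First I would dispose of existence and uniqueness. The product $X_1\cdots X_m$ of independent random variables with $X_i$ distributed as $h(u_i,v_i-u_i;\cdot)$ takes values in $[0,1]$ and, by the convolution rule (\ref{convol}) applied $m-1$ times, has density $f=h(u_1,v_1-u_1;\cdot)\ast\cdots\ast h(u_m,v_m-u_m;\cdot)$ whose moments are the products $\prod_{i=1}^m (u_i)_n/(v_i)_n$. Because densities on $[0,1]$ are determined by their moments, this $f$ is the unique density with the stated moment sequence, and by Proposition \ref{series1-x} it already has the asserted form $\frac{1}{\Gamma(\delta)}\prod\frac{\Gamma(v_i)}{\Gamma(u_i)}(1-x)^{\delta-1}\{1+\sum_{n\ge 1}c_n(1-x)^n\}$ for some coefficients $c_n$, convergent for $|1-x|<1$; it remains only to identify these $c_n$ with the output of (\ref{recF}).

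The decisive step is to show $T(u,v)f=0$ on $(0,1)$. I would argue that every moment of $T(u,v)f$ vanishes and then use moment determinacy. Running the integrations by parts from the proof of Proposition \ref{diffeqmts} on $f$ itself gives $\int_0^1 x^n T(u,v)f\,dx = (-1)^m\{\prod_i(n+u_i)\,M_n-\prod_i(n+v_i)\,M_{n+1}\}$, where $M_n=\prod_i (u_i)_n/(v_i)_n$; this is exactly zero since $M_{n+1}/M_n=\prod_i (u_i+n)/(v_i+n)$. The catch is that these integrations by parts generate boundary terms at $x=1$, and the proof of Proposition \ref{diffeqmts} discards them using $f^{(j)}(1)=0$ for $0\le j\le m-1$; as $f\sim(1-x)^{\delta-1}$, these boundary conditions hold outright only when $\delta>m$. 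I therefore expect the boundary terms to be the main obstacle, and I would handle them by analytic continuation in the parameters: for $\delta>m$ the boundary terms vanish, so $T(u,v)f=0$ and the coefficients of $f$ satisfy (\ref{recF}) with $c_0=1$; but for each fixed $n$ the $n$th coefficient of the $(1-x)$-expansion of the convolution density is analytic in $(u_1,\dots,u_m,v_1,\dots,v_m)$ throughout $v_i>u_i>0$, while $c_n$ defined by (\ref{recF}) is a rational function of these parameters with no poles once $\delta>0$ (as verified in the paragraph preceding the theorem). Two such functions agreeing on the open set $\delta>m$ agree on the whole region by the identity theorem, so the identification persists for every $\delta>0$.

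Finally, with $T(u,v)f=0$ in hand, I would substitute $f=\mathrm{const}\cdot t^{\delta-1}\sum_{n\ge 0}c_n t^n$ (with $t=1-x$, $c_0=1$) into the expansion $T(u,v)t^\gamma=\sum_{k=-1}^{m-1}R_k(\gamma)t^{\gamma-k}$ and equate the coefficient of each power of $t$ to zero; this reproduces the recurrence displayed before the theorem. Evaluating $R_{m-1}$ at $\gamma=n+\delta-1$ yields the factor $-n(-n-\delta+1)_{m-1}$, and dividing through converts that recurrence into (\ref{recF}), which determines $c_n$ for $n\ge 1$ from $c_0=1$ and completes the proof.
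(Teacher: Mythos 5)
Your proof is correct, and it assembles the same ingredients as the paper --- existence via the convolution construction, Proposition \ref{series1-x} for the shape of the expansion, the restriction to $\delta>m$ so that the boundary behaviour at $x=1$ cooperates, and analytic continuation in the parameters to reach all $\delta>0$ --- but you run the identification in the opposite direction. The paper never shows that the convolution density itself satisfies $T(u,v)f=0$: it takes $g$ to be the function built from the recurrence (so $T(u,v)g=0$ by construction, and $g^{(j)}(1)=0$ for $0\le j\le m-1$ follows from the factor $(1-x)^{\delta-1}$ once $\delta>m$), applies Proposition \ref{diffeqmts} to conclude that $g$ has moments proportional to those of $f$, and then pins down the constant by matching the leading coefficients given by Proposition \ref{series1-x}. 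You instead prove $T(u,v)f=0$ directly, by running the integrations by parts on $f$, observing that $\prod_i(n+u_i)M_n-\prod_i(n+v_i)M_{n+1}=0$, and invoking moment determinacy for the signed integrable function $T(u,v)f$; the recurrence then drops out of equating coefficients, using that $R_{m-1}(n+\delta-1)=-n(-n-\delta+1)_{m-1}\ne0$ for $n\ge1$, $\delta>0$, so the coefficients are forced. Your route costs an extra determinacy step (all moments zero implies the function vanishes a.e., hence identically by continuity) and a check that boundary terms at both endpoints disappear, but it makes explicit why the density satisfies the differential equation, which the paper leaves implicit; the paper's route is shorter because $T(u,v)g=0$ is free. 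Your analytic-continuation endgame --- continuing the individual expansion coefficients, each analytic on $\{v_i>u_i>0\}$, against the recurrence output, pole-free for $\delta>0$, with agreement on the open connected subregion $\delta>m$ --- is in substance the paper's one-line appeal, stated somewhat more carefully; both versions leave the analyticity of the convolution-side coefficients as a routine verification.
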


\begin{proof}
The density exists because it is the distribution of the random variable
$\prod_{i=1}^{m}X_{i}$ where the $X_{i}$'s are jointly independent and the
moments of $X_{i}$ are $\frac{\left(  u_{i}\right)  _{n}}{\left(
v_{i}\right)  _{n}}$ for each $i$. By Proposition \ref{series1-x} $f$ has the
series expansion given in the statement. Let $g\left(  x\right)  $ be the
function given in the statement and suppose for now that $\delta>m$ then $g$
is a solution of the differential system (\ref{Fsys}) (because of the factor
$\left(  1-x\right)  ^{\delta-1}$). By Proposition \ref{diffeqmts} $g$ has the
same moments as $Cf$ for some constant $C$. By Proposition \ref{series1-x} $f$
and $g$ have the same leading coefficient in their series expansions. Hence
$f=g$. The coefficients $c_{n}$ are analytic in the parameters for the range
$\delta>m$. Each moment $\int_{0}^{1}x^{n}f\left(  x\right)  dx$ is similarly
analytic and so the formula is valid for all $\delta>0$, by use of analytic
continuation from the range $\delta>m$.
\end{proof}

The coefficients occurring in the recurrence (\ref{recF}) are expressions in
the parameters $u,v$, which can be straightforwardly computed, especially with
computer symbolic algebra.

\section{Examples}

\subsection{Density for $m=2$}

For the easy case $m=2$ we can directly find the density function, in a
slightly different form.

Given $u_{1},u_{2},v_{1},v_{2}>0$ and $\delta=v_{1}+v_{2}-u_{1}-u_{2}>0$ set%
\begin{align*}
g\left(  u,v;x\right)   &  =\frac{\Gamma\left(  v_{1}\right)  \Gamma\left(
v_{2}\right)  }{\Gamma\left(  u_{1}\right)  \Gamma\left(  u_{2}\right)
\Gamma\left(  \delta\right)  }\\
&  \times x^{u_{2}-1}\left(  1-x\right)  ^{\delta-1}~_{2}F_{1}\left(
\genfrac{}{}{0pt}{}{v_{2}-u_{2},v_{1}-u_{2}}{\delta}%
;1-x\right)
\end{align*}
then%
\[
\int_{0}^{1}x^{n}g\left(  u,v;x\right)  dx=\frac{\left(  u_{1}\right)
_{n}\left(  u_{2}\right)  _{n}}{\left(  v_{1}\right)  _{n}\left(
v_{2}\right)  _{n}},n=0,1,2,\ldots.
\]

\begin{proof}
Consider%
\begin{align*}
&  \int_{0}^{1}x^{n+u_{1}-1}\left(  1-x\right)  ^{\delta-1}~_{2}F_{1}\left(
\genfrac{}{}{0pt}{}{v_{2}-u_{2},v_{1}-u_{2}}{\delta}%
;1-x\right)  dx\\
&  =\sum_{m=0}^{\infty}\frac{\left(  v_{2}-u_{2}\right)  _{m}\left(
v_{1}-u_{2}\right)  _{m}}{m!\left(  \delta\right)  _{m}}\frac{\Gamma\left(
n+u_{1}\right)  \Gamma\left(  \delta+m\right)  }{\Gamma\left(  u_{1}%
+\delta+m+n\right)  }\\
&  =\frac{\Gamma\left(  u_{1}\right)  \left(  u_{1}\right)  _{n}\Gamma\left(
\delta\right)  }{\Gamma\left(  n+u_{1}+\delta\right)  }\sum_{m=0}^{\infty
}\frac{\left(  v_{2}-u_{2}\right)  _{m}\left(  v_{1}-u_{2}\right)  _{m}%
}{m!\left(  n+u_{1}+\delta\right)  _{m}}\\
&  =\frac{\Gamma\left(  u_{1}\right)  \left(  u_{1}\right)  _{n}\Gamma\left(
\delta\right)  }{\Gamma\left(  n+u_{1}+\delta\right)  }\frac{\Gamma\left(
n+u_{1}+\delta\right)  \Gamma\left(  n+u_{1}+\delta-v_{2}-v_{1}+2u_{2}\right)
}{\Gamma\left(  n+u_{1}+\delta-v_{2}+u_{2}\right)  \Gamma\left(
n+u_{1}+\delta-v_{1}+u_{2}\right)  }\\
&  =\frac{\Gamma\left(  u_{1}\right)  \Gamma\left(  u_{2}\right)
\Gamma\left(  v_{2}+v_{1}-u_{1}-u_{2}\right)  }{\Gamma\left(  v_{2}\right)
\Gamma\left(  v_{1}\right)  }\frac{\left(  u_{1}\right)  _{n}\left(
u_{2}\right)  _{n}}{\left(  v_{2}\right)  _{n}\left(  v_{1}\right)  _{n}},
\end{align*}
for each $n$.
\end{proof}

By using standard transformations we can explain the other formulation for
$g\left(  u,v;x\right)  $ near $x=0.$ From \cite[p.249, (9.5.7)]{LE}%
\begin{align*}
F\left(
\genfrac{}{}{0pt}{}{\alpha,\delta}{\gamma}%
;1-x\right)   &  =\frac{\Gamma\left(  \gamma-\alpha-\delta\right)
\Gamma\left(  \gamma\right)  }{\Gamma\left(  \gamma-\alpha\right)
\Gamma\left(  \gamma-\delta\right)  }F\left(
\genfrac{}{}{0pt}{}{\alpha,\delta}{1+\alpha+\delta-\gamma}%
;x\right)  \\
&  +\frac{\Gamma\left(  \alpha+\delta-\gamma\right)  \Gamma\left(
\gamma\right)  }{\Gamma\left(  \alpha\right)  \Gamma\left(  \delta\right)
}x^{\gamma-\alpha-\delta}F\left(
\genfrac{}{}{0pt}{}{\gamma-\alpha,\gamma-\delta}{1+\gamma-\alpha-\delta}%
;x\right)  .
\end{align*}
applied to $g\left(  u,v;x\right)  $ (provided $u_{1}-u_{2}\notin\mathbb{Z}$)
we find%
\begin{align}
g\left(  u,v;x\right)   &  =\frac{\Gamma\left(  v_{2}\right)  \Gamma\left(
v_{1}\right)  \Gamma\left(  u_{2}-u_{1}\right)  }{\Gamma\left(  u_{1}\right)
\Gamma\left(  u_{2}\right)  \Gamma\left(  v_{2}-u_{1}\right)  \Gamma\left(
v_{1}-u_{1}\right)  }\label{2F1X}\\
&  \times x^{u_{1}-1}\left(  1-x\right)  ^{\delta-1}~_{2}F_{1}\left(
\genfrac{}{}{0pt}{}{v_{2}-u_{2},v_{1}-u_{2}}{1+u_{1}-u_{2}}%
;x\right)  \nonumber\\
&  +\frac{\Gamma\left(  v_{2}\right)  \Gamma\left(  v_{1}\right)
\Gamma\left(  u_{1}-u_{2}\right)  }{\Gamma\left(  u_{1}\right)  \Gamma\left(
u_{2}\right)  \Gamma\left(  v_{2}-u_{2}\right)  \Gamma\left(  v_{1}%
-u_{2}\right)  }\nonumber\\
&  \times x^{u_{2}-1}\left(  1-x\right)  ^{\delta-1}~_{2}F_{1}\left(
\genfrac{}{}{0pt}{}{v_{2}-u_{1},v_{1}-u_{1}}{1+u_{2}-u_{1}}%
;x\right)  .\nonumber
\end{align}
This is quite similar to the general formula (\ref{sol0}), and the following
standard transformation explains the difference%
\begin{equation}
_{2}F_{1}\left(
\genfrac{}{}{0pt}{}{a,b}{c}%
;x\right)  =\left(  1-x\right)  ^{c-a-b}~_{2}F_{1}\left(
\genfrac{}{}{0pt}{}{c-a,c-b}{c}%
;x\right)  .\label{2F1trans}%
\end{equation}
If $u_{1}-u_{2}\in\mathbb{Z}$ then there are terms in $\log x$. The relevant
formula can be found in \cite[p. 257, (9.7.5)]{LE}. Suppose $u_{2}=u_{1}+n$
and $n=0,1,2,\ldots,$ $\delta=v_{2}+v_{1}-2u_{1}-n$ then%
\begin{gather*}
g\left(  u,v;x\right)  =\frac{\Gamma\left(  v_{2}\right)  \Gamma\left(
v_{1}\right)  }{\Gamma\left(  u_{1}\right)  \Gamma\left(  u_{1}+n\right)
\Gamma\left(  v_{2}-u_{1}\right)  \Gamma\left(  v_{1}-u_{1}\right)  }\left(
1-x\right)  ^{\delta-1}\\
\times x^{u_{1}-1}\sum_{k=0}^{n-1}\frac{\left(  n-k-1\right)  !}{k!}\left(
v_{2}-u_{1}-n\right)  _{k}\left(  v_{1}-u_{1}-n\right)  _{k}\left(  -x\right)
^{k}\\
+\frac{\Gamma\left(  v_{2}\right)  \Gamma\left(  v_{1}\right)  }{\Gamma\left(
u_{1}\right)  \Gamma\left(  u_{1}+n\right)  \Gamma\left(  v_{2}-u_{1}%
-n\right)  \Gamma\left(  v_{1}-u_{1}-n\right)  }\left(  1-x\right)
^{\delta-1}\\
\times\left(  -1\right)  ^{n}x^{u_{1}+n-1}\left(  -\log x\right)  \frac{1}%
{n!}~_{2}F_{1}\left(
\genfrac{}{}{0pt}{}{v_{2}-u_{1},v_{1}-u_{1}}{n+1}%
;x\right)  \\
+\frac{\left(  -1\right)  ^{n}\Gamma\left(  v_{2}\right)  \Gamma\left(
v_{1}\right)  }{\Gamma\left(  u_{1}\right)  \Gamma\left(  u_{1}+n\right)
\Gamma\left(  v_{2}-u_{1}-n\right)  \Gamma\left(  v_{1}-u_{1}-n\right)
}x^{u_{1}+n-1}\left(  1-x\right)  ^{\delta-1}\\
\times\sum_{k=0}^{\infty}\frac{\left(  v_{2}-u_{1}\right)  _{k}\left(
v_{1}-u_{1}\right)  _{k}}{k!\left(  n+k\right)  !}\\
\left\{  \psi\left(  k+1\right)  +\psi\left(  n+k+1\right)  -\psi\left(
v_{2}-u_{1}+k\right)  -\psi\left(  v_{1}-u_{1}+k\right)  \right\}  x^{k}.
\end{gather*}
\bigskip

If $u_{1}-u_{2}\notin\mathbb{Z}$ then near $x=0$ the density is $\sim
C_{0}x^{u_{1}-1}+C_{1}x^{u_{2}-1}$, but if $u_{2}=u_{1}+n$ then the density
$\sim C_{0}x^{u_{1}-1}+C_{1}x^{u_{1}+n-1}\left(  -\log x\right)  $.

\subsection{Example: parametrized family with $m=3$}

Consider the determinant of a random $4\times4$ state, that is, a random (with
the Hilbert-Schmidt metric) positive-definite matrix with trace one. The
moments can be directly computed for the real and complex cases and
incorporated into a family of variables with a parameter. Here the variable is
$256$ times the determinant (to make the range $\left[  0,1\right]  $) and
$\alpha=\frac{1}{2}$ for $\mathbb{R}$, $\alpha=1$ for $\mathbb{C}$, and
$\alpha=2$ for $\mathbb{H}$ (the quaternions). This example is one of the
motivations for the preparation of this exposition. The problem occurred in
Slater's study of the determinant of a partially transposed state in its role
as separability criterion  \cite{S}. 

The moment sequence is
\[
\frac{\left(  1\right)  _{n}\left(  \alpha+1\right)  _{n}\left(
2\alpha+1\right)  _{n}}{\left(  3\alpha+\frac{5}{4}\right)  _{n}\left(
3\alpha+\frac{3}{2}\right)  _{n}\left(  3\alpha+\frac{7}{4}\right)  _{n}%
},n=0,1,2,\cdots;
\]
thus $\delta=6\alpha+\frac{3}{2}$. For generic $\alpha$ the density is%
\begin{align*}
&  \frac{3\left(  12\alpha+1\right)  \left(  6\alpha+1\right)  \left(
4\alpha+1\right)  }{64\alpha^{2}}~_{3}F_{2}\left(
\genfrac{}{}{0pt}{}{\frac{3}{4}-3\alpha,\frac{1}{2}-3\alpha,\frac{1}%
{4}-3\alpha}{1-\alpha,1-2\alpha}%
;x\right)  \\
&  -\frac{\Gamma\left(  6\alpha+\frac{5}{2}\right)  \Gamma\left(
3\alpha+\frac{3}{2}\right)  2^{10\alpha}}{4\alpha\sin\left(  \pi\alpha\right)
\Gamma\left(  \alpha+1\right)  \Gamma\left(  8\alpha+1\right)  }x^{\alpha
}~_{3}F_{2}\left(
\genfrac{}{}{0pt}{}{\frac{3}{4}-2\alpha,\frac{1}{2}-2\alpha,\frac{1}%
{4}-2\alpha}{1-\alpha,1+\alpha}%
;x\right)  \\
&  +\frac{\left(  2\alpha+1\right)  ^{2}\pi^{3}\Gamma\left(  3\alpha+\frac
{5}{2}\right)  \Gamma\left(  6\alpha+\frac{5}{2}\right)  2^{-8\alpha}}%
{48\sin\left(  \pi\alpha\right)  \sin\left(  2\pi\alpha\right)  \Gamma\left(
2\alpha+\frac{1}{2}\right)  \Gamma\left(  \alpha+\frac{3}{2}\right)
^{3}\Gamma\left(  \alpha+1\right)  ^{4}}x^{2\alpha}\\
&  \times~_{3}F_{2}\left(
\genfrac{}{}{0pt}{}{\frac{3}{4}-\alpha,\frac{1}{2}-\alpha,\frac{1}{4}%
-\alpha}{1+\alpha,1+2\alpha}%
;x\right)  .
\end{align*}
For numeric computation at $\alpha=\frac{1}{2},1,2$ one can employ
interpolation techniques; for example%
\begin{align*}
f\left(  \alpha_{0};x\right)   &  =\frac{2}{3}\left(  f\left(  \alpha
_{0}+h;x\right)  +f\left(  \alpha_{0}-h;x\right)  \right)  -\frac{1}{6}\left(
f\left(  \alpha_{0}+2h;x\right)  +f\left(  \alpha_{0}-2h;x\right)  \right)  \\
&  -\frac{1}{6}\left(  \frac{\partial}{\partial\alpha}\right)  ^{4}f\left(
\alpha_{0}+\xi h;x\right)  h^{4},
\end{align*}
where $f\left(  \alpha;x\right)  $ denotes the density for specific $\alpha$
and the last term is the error (for some $\xi\in\left(  -2,2\right)  $); thus
the perturbed densities can be computed by the general formula.

\subsection{Example: the recurrence for $m=4$}

Given $u_{1},\ldots,u_{4},v_{1},\ldots,v_{4}$ define $p\left(  c\right)
=\prod_{i=1}^{4}\left(  c+1-u_{i}\right)  ,q\left(  c\right)  =\prod_{i=1}%
^{4}\left(  c+2-v_{i}\right)  $, $q_{1}\left(  c\right)  =(c+1)q\left(
c\right)  -cq\left(  c-1\right)  $, $\delta=\sum_{i=1}^{4}\left(  v_{i}%
-u_{i}\right)  ,$%
\begin{align*}
R_{0}^{\prime}\left(  \gamma\right)   &  =p\left(  \gamma\right)
-q_{1}\left(  \gamma\right)  ,\\
R_{1}^{\prime}\left(  \gamma\right)   &  =\nabla p\left(  \gamma\right)
-\frac{1}{2}\nabla q_{1}\left(  \gamma\right)  ,\\
R_{2}^{\prime}\left(  \gamma\right)   &  =\frac{1}{2}\nabla^{2}p\left(
\gamma\right)  -\frac{1}{6}\nabla^{2}q_{1}\left(  \gamma\right)  ,
\end{align*}
then set $c_{0}=1$,%
\begin{align*}
c_{1} &  =\frac{1}{\delta}R_{2}^{\prime}\left(  \delta-1\right)  c_{0},\\
c_{2} &  =\frac{1}{2\left(  \delta+1\right)  }R_{2}^{\prime}\left(
\delta\right)  c_{1}-\frac{1}{2\delta\left(  \delta+1\right)  }R_{1}^{\prime
}\left(  \delta-1\right)  c_{0},\\
c_{3} &  =\frac{1}{3\left(  \delta+2\right)  }R_{2}^{\prime}\left(
\delta+1\right)  c_{2}-\frac{1}{3\left(  \delta+1\right)  \left(
\delta+2\right)  }R_{1}^{\prime}\left(  \delta\right)  c_{1}\\
&  +\frac{1}{3\delta\left(  \delta+1\right)  \left(  \delta+2\right)  }%
R_{0}^{\prime}\left(  \delta-1\right)  c_{0},\\
c_{n} &  =\frac{1}{n\left(  \delta+n-1\right)  }R_{2}^{\prime}\left(
n+\delta-2\right)  c_{n-1}-\frac{1}{n\left(  \delta+n-2\right)  _{2}}%
R_{1}^{\prime}\left(  n+\delta-3\right)  c_{n-2}\\
&  +\frac{1}{n\left(  \delta+n-3\right)  _{3}}R_{0}^{\prime}\left(
n+\delta-4\right)  c_{n-3}+\frac{1}{n\left(  \delta+n-3\right)  _{3}}q\left(
n+\delta-5\right)  c_{n-4},
\end{align*}
for $n\geq4$.

\subsection{Example: a Macdonald-Mehta-Selberg integral}

Let $S$ be the $3$-dimensional unit sphere $\left\{  x\in\mathbb{R}^{4}%
:\sum_{i=1}^{4}x_{i}^{2}=1\right\}  $ with normalized surface measure
$d\omega$. Consider $\prod_{1\leq i<j\leq4}\left(  x_{i}-x_{j}\right)  ^{2}$
as a random variable (that is, evaluated at a $d\omega$-random point).
Interestingly,  the maximum value $\frac{1}{108}$ is achieved at the 24 points
with (permutations of the) coordinates $\left\{  \pm\frac{1}{6}\sqrt
{9\pm3\sqrt{6}}\right\}  $, which is the zero-set of the rescaled Hermite
polynomial $H_{4}\left(  \sqrt{6}t\right)  $. The Macdonald-Mehta-Selberg
integral (see \cite[p. 319]{DX}) implies (for $\kappa\geq0$)%
\[
\int_{S}\prod_{1\leq i<j\leq4}\left\vert x_{i}-x_{j}\right\vert ^{2\kappa
}d\omega\left(  x\right)  =\frac{1}{2^{6\kappa}}\frac{\Gamma\left(
1+2\kappa\right)  \Gamma\left(  1+3\kappa\right)  \Gamma\left(  1+4\kappa
\right)  }{\Gamma\left(  2+6\kappa\right)  \Gamma\left(  1+\kappa\right)
^{3}}.
\]
For integer values $\kappa=n$ the Gamma functions simplify to Pochhammer
symbols; then by use of formulas like $\left(  1\right)  _{4n}=4^{4n}\left(
\frac{1}{4}\right)  _{n}\left(  \frac{1}{2}\right)  _{n}\left(  \frac{3}%
{4}\right)  _{n}\left(  1\right)  _{n}$ the value becomes%
\[
\mu_{n}=\frac{1}{108^{n}}\frac{\left(  \frac{1}{4}\right)  _{n}\left(
\frac{1}{2}\right)  _{n}\left(  \frac{3}{4}\right)  _{n}}{\left(  \frac{5}%
{6}\right)  _{n}\left(  1\right)  _{n}\left(  \frac{7}{6}\right)  _{n}}.
\]
Let $f_{D}$ denote the density function of $D=108\prod_{1\leq i<j\leq4}\left(
x_{i}-x_{j}\right)  ^{2}$ (by the general results the range of $D$ is $\left[
0,1\right]  $). Applying Theorem \ref{thm1mx} we find%
\[
f_{D}\left(  x\right)  =\frac{\sqrt{2}}{3\pi}\left(  1-x\right)  ^{\frac{1}%
{2}}\left\{  1+\frac{221}{216}\left(  1-x\right)  +\frac{156697}%
{155520}\left(  1-x\right)  ^{2}+\frac{232223093}{235146240}\left(
1-x\right)  ^{3}+\ldots\right\}  .
\]
By formula (\ref{bigf})%
\begin{align*}
f_{D}\left(  x\right)   &  =\gamma_{1}x^{-\frac{3}{4}}~_{3}F_{2}\left(
\genfrac{}{}{0pt}{}{\frac{5}{12},\frac{1}{4},\frac{1}{12}}{\frac{3}{4}%
,\frac{1}{2}}%
;x\right)  +\gamma_{2}x^{-\frac{1}{2}}~_{3}F_{2}\left(
\genfrac{}{}{0pt}{}{\frac{2}{3},\frac{1}{2},\frac{1}{3}}{\frac{3}{4},\frac
{5}{4}}%
;x\right)  \\
&  +\gamma_{3}x^{-\frac{1}{4}}~_{3}F_{2}\left(
\genfrac{}{}{0pt}{}{\frac{3}{4},\frac{7}{12},\frac{11}{12}}{\frac{3}{2}%
,\frac{5}{4}}%
;x\right)  ,
\end{align*}
where
\begin{align*}
\gamma_{1} &  =\frac{\pi}{3\Gamma\left(  \frac{3}{4}\right)  ^{2}\Gamma\left(
\frac{7}{12}\right)  \Gamma\left(  \frac{11}{12}\right)  },\\
\gamma_{2} &  =-\frac{2\sqrt{3}}{3\pi},\\
\gamma_{3} &  =\frac{1}{3\pi^{3}}\Gamma\left(  \frac{3}{4}\right)  ^{2}%
\Gamma\left(  \frac{7}{12}\right)  \Gamma\left(  \frac{11}{12}\right)  .
\end{align*}
It is straightforward to derive a series for the cumulative distribution
function $F_{D}\left(  x\right)  =\int_{0}^{x}f_{D}\left(  t\right)  dt$.
Figures \ref{cumul0} and \ref{cumula1} are graphs of $f_{D}$ and $F_{D}$
respectively (of course there is vertical asymptote for $f_{D}$). For
computations we used terms up to the eighth power, with the series in $x$ for
$0<x\leq0.55$ and the $\left(  1-x\right)  $ series for $0.55<x\leq1$. For a
better view there is a graph of $F_{D}\left(  x\right)  $ for $0\leq
x\leq0.04$ in Fig.\ref{cumula2} and of $1-F_{D}\left(  x\right)  $ for
$0.4\leq x\leq1$ in Fig. \ref{cumul3}.%

\begin{figure}
[p]
\begin{center}
\includegraphics[
height=3.045in,
width=4.0413in
]%
{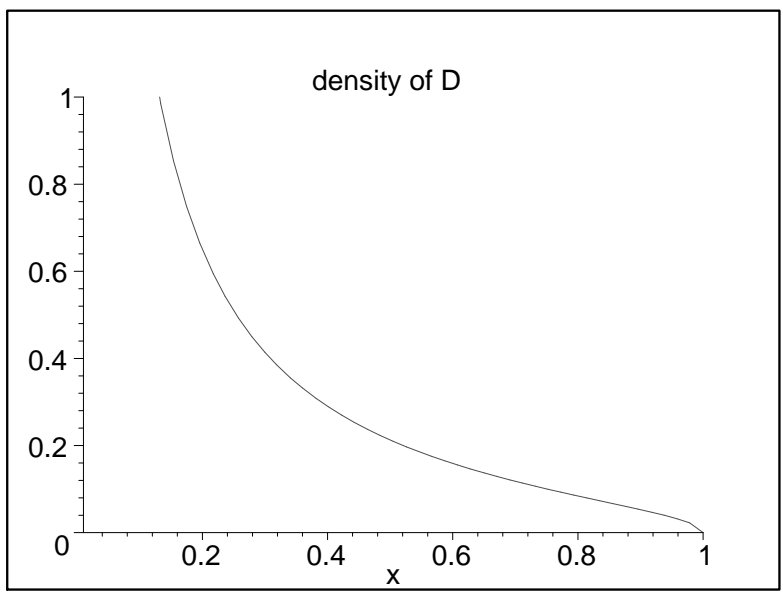}%
\caption{Density of D, partial view}%
\label{cumul0}%
\end{center}
\end{figure}
%

\begin{figure}
[ptb]
\begin{center}
\includegraphics[
height=3.0364in,
width=4.0318in
]%
{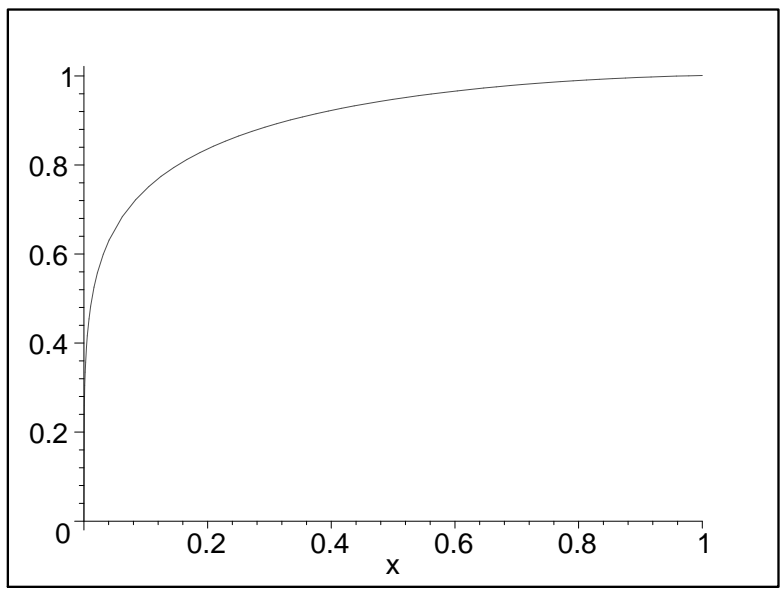}%
\caption{Cumulative distribution function of D}%
\label{cumula1}%
\end{center}
\end{figure}
%

\begin{figure}
[ptb]
\begin{center}
\includegraphics[
height=3.0364in,
width=4.0318in
]%
{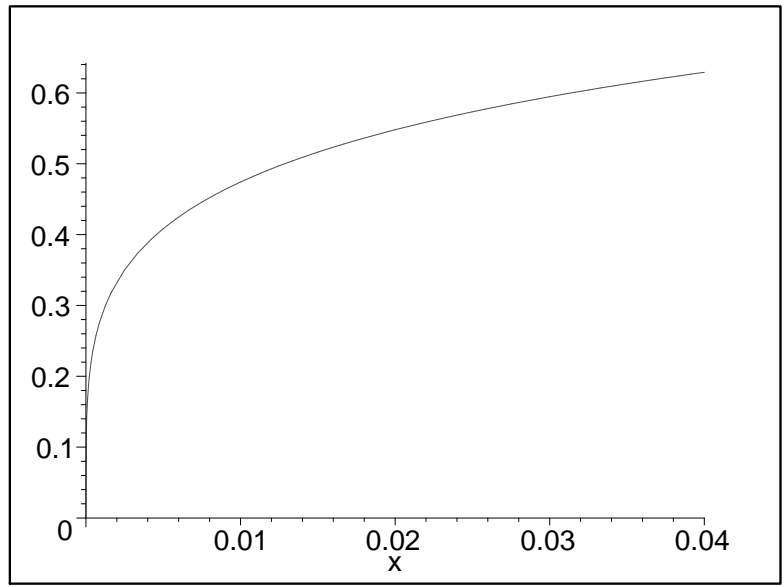}%
\caption{Part of cumulative distribution of D}%
\label{cumula2}%
\end{center}
\end{figure}
%

\begin{figure}
[p]
\begin{center}
\includegraphics[
height=3.0519in,
width=4.0534in
]%
{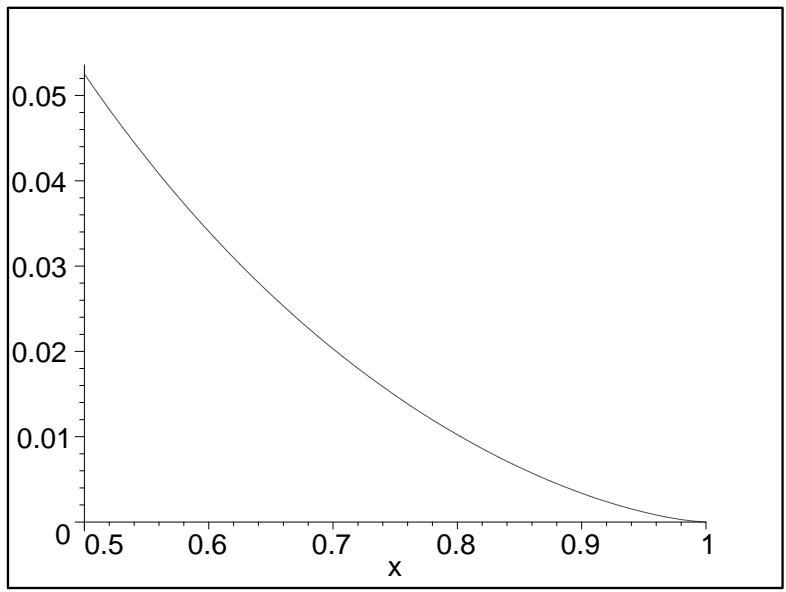}%
\caption{Part of complementary cumulative distribution of D}%
\label{cumul3}%
\end{center}
\end{figure}

\end{document}